\newtheorem{thm}{Theorem}
\newtheorem{claim}[thm]{Claim}
\newtheorem{prop}[thm]{Proposition}
\newtheorem{conj}[thm]{Conjecture}
\newtheorem{defn}{Definition}
\theoremstyle{remark}
\theoremstyle{definition}
\newcommand{\Z}{\mathbb{Z}}
\newcommand{\Q}{\mathbb{Q}}
\newcommand{\C}{\mathbb{C}}
\newcommand{\seq}[4]{\{#1\}_{#2=#3}^{#4}}
\title{Non-linear Recurrences that Quite Unexpectedly Generate Rational Numbers}
\author{Emilie Hogan}
\begin{document}
\maketitle

\begin{abstract}
Non-linear recurrences which generate integers in a surprising way have been
studied by many people. Typically people study recurrences that are linear in
the highest order term. In this paper I consider what happens when the
recurrence is not linear in the highest order term. In this case we no longer
produce a unique sequence, but we sometimes have surprising results. If the
highest order term is raised to the $m^{th}$ power we expect answers to have
$m^{th}$ roots, but for some specific recurrences it happens that we generate
rational numbers ad infinitum. I will give a general example in the case of a
first order recurrence with $m=2$, and a more specific example that is order 3
with $m=2$ which comes from a generalized Somos recurrence.
\end{abstract}

\section{Introduction}
Many people have studied non-linear recurrences that generate
sequences of integers despite the fact that every iteration of the
recurrence requires division by some previous term in the sequence.
These types of non-linear recurrences generally have the following
form
\begin{align}\label{nonLinRecurDef}
a(n) = L(a(n-1),\ldots, a(n-k))
\end{align}
where $L$ is a Laurent polynomial with integer coefficients, i.e.- $L$ is in
the set $\Z[x_1^{\pm 1}, \ldots, x_{k}^{\pm 1}]$. Well studied examples of this
phenomenon are the Somos sequences, introduced by Michael Somos in 1989
\cite{DGale}, defined by the recurrence
\begin{align*}
s(n)s(n-k)=\sum_{i=1}^{\left\lfloor\frac{k}{2}\right\rfloor}
s(n-i)s(n-k+i)
\end{align*}
with initial conditions $s(m)=1$ for $m\leq k$. For $k=2,3$ the recurrence
generates the infinite sequence $\{1\}_{n=1}^\infty$. More interestingly, for
$k=4,5,6,7$ it is known that these recurrences each generate an infinite
sequence of integers (\cite{FZ}\cite{DGale}). There are, of course, other
examples of this integrality phenomenon, and many are generalizations of the
Somos recurrences (for some examples see \cite{DGale}).

It seems to be the case that all recurrences of the form
(\ref{nonLinRecurDef}) that have been studied have no exponent on
$a(n)$. In this paper I will discuss recurrences of the form
\begin{align}\label{MnonLinRecurDef}
a(n)^m = L(a(n),a(n-1),\ldots, a(n-k))
\end{align}
where $m>1$, and $L$ is still a Laurent polynomial. I will refer to
$m$ many times in this paper, and (unless otherwise stated) this
will refer to the exponent on $a(n)$ in the left-hand side of a
recurrence of the form (\ref{MnonLinRecurDef}).

In general one would imagine that if $a(n)$ is raised to a power $m>1$ then
what is generated is not a sequence at all, since solving an equation of degree
$m$ yields up to $m$ answers. So I need to introduce the concept of a
\emph{recurrence tree}.
\begin{defn}
A \emph{recurrence tree} is a way of storing the values generated by a
recurrence of the form (\ref{MnonLinRecurDef}) with $m>1$. Solving for the
$n+1^{st}$ term, given a specific $n^{th}$ term requires solving an equation of
degree $m$. This yields up to $m$ possibilities for the $n+1^{st}$ term. We can
store these numbers in a complete $m$-ary tree.
\end{defn}
\noindent For example when $m=2$ we get the following structure
 \[
    \xy
    (0,0)*+{a(1)}="a1";
    (-10,-10)*+{a(2)_1}="a21";
    (10,-10)*+{a(2)_2}="a22";
    (-20,-20)*+{a(3)_{1,1}}="a31";
    (-7,-20)*+{a(3)_{1,2}}="a32";
    (7,-20)*+{a(3)_{2,1}}="a33";
    (20,-20)*+{a(3)_{2,2}}="a34";
    (-20,-25)*+{\vdots};
    (-7,-25)*+{\vdots};
    (7,-25)*+{\vdots};
    (20,-25)*+{\vdots};
    {\ar@{-} "a1"; "a21"};
    {\ar@{-} "a1"; "a22"};
    {\ar@{-} "a21"; "a31"};
    {\ar@{-} "a21"; "a32"};
    {\ar@{-} "a22"; "a33"};
    {\ar@{-} "a22"; "a34"};
    \endxy
 \]

Also, since solving an equation of degree $m$ yields answers which can be in
$\C$, we may expect that the numbers generated are not rational. However, in
some cases a recurrence of this form generates rational numbers. When the tree
consists only of numbers in $\Q$ (resp. $\Z$) we will call it a \emph{rational
(resp. integer) recurrence tree}.

One way to come up with recurrences that obviously generate rational
recurrence trees is to take a recurrence that generates integers and
find the ``ratios of ratios" sequence.
\begin{defn}
Given the sequence $\{b(n)\}_{n=1}^\infty$, we call
$\left\{\frac{b(n+1)}{b(n)}\right\}_{n=1}^\infty$ the \emph{sequence
of ratios of $\{b(n)\}$} and
$\left\{\frac{b(n+2)/b(n+1)}{b(n+1)/b(n)}\right\}_{n=1}^\infty$ the
\emph{sequence of ratios of ratios of $\{b(n)\}$}.
\end{defn}
\noindent Obviously, if a sequence $\{b(n)\}_{n=1}^\infty \subset
\Z$ then the sequence of ratios of $\{b(n)\}$ and the sequence of
ratios of ratios of $\{b(n)\}$ are in $\Q$.

Of course, it may not be the case that the recurrence that generates
these ratio sequences has the $m>1$ property, but in the case of the
generalized Somos-4 sequences we can find an alternate recurrence
for the sequence of ratios of ratios that does have this property.
We can then generalize and find new recurrences that do not
obviously generate rational numbers.

\section{Generalized Somos-4 Ratios of Ratios Sequence}\label{GenS4RatRat}

Let $\{s_c(n)\}_{n=1}^\infty$ be a sequence defined by the following
recurrence:
\begin{align}\label{GenSom4}
s_c(n) s_c(n-4)&=c_1s_c(n-1)s_c(n-3)+c_2s_c(n-2)^2
\end{align}
with initial conditions $s_c(i)=1$ for $i\leq4$, where $c=(c_1,c_2) \in \Z^2$ .
This is a special case of the three term Gale-Robinson recurrence
(\cite{FZ}\cite{DGale}) that further specializes to the Somos-4 recurrence when
$c_1=c_2=1$. The first few terms of the sequence are
\[ 1,1,1,1,c_1+c_2,c_1^2+c_1c_2+c_2,c_1^3+2c_1^2c_2+c_1c_2+2c_1c_2^2+c_2^3,\ldots \]

Using cluster algebras and the Caterpillar Lemma, Fomin and Zelevinsky proved
that the recurrence (\ref{GenSom4}) generates a sequence of integers \cite{FZ}.

Now, define sequences $\seq{t_c(n)}{n}{1}{\infty}$, and
$\seq{a_c(n)}{n}{1}{\infty}$ by
\begin{align*}
t_c(n)&=s_c(n+1)/s_c(n)\\
a_c(n)&=t_c(n+1)/t_c(n)
\end{align*}
 then
$\seq{t_c(n)}{n}{1}{\infty}$ is the sequence of ratios of $s_c(n)$
\[\{t_c(n)\}=\left\{1,1,1,c_1+c_2,\frac{c_1^2+c_1c_2+c_2}{c_1+c_2},\frac{c_1^3+2c_1^2c_2+c_1c_2+2c_1c_2^2+c_2^3}{c_1^2+c_1c_2+c_2},\ldots\right\}\]
and $\{a_c(n)\}$ is the sequence of ratios of ratios of $s_c(n)$.
\[\seq{a_c(n)}{n}{1}{\infty}=\left\{1,1,c_1+c_2,\frac{c_1^2+c_1c_2+c_2}{(c_1+c_2)^2},\ldots\right\}\]
In this paper we will be interested in the sequence $\{a_c(n)\}$. By
algebraic manipulation we can easily find a first order quadratic
recurrence for $a_c(n)$.

\begin{claim}\label{acRecurC} The sequence $\{a_c(n)\}_{n=1}^\infty$ is defined by
the recurrence
\begin{align}\label{acRecur}
a_c(n+2)a_c(n+1)^2a_c(n)=c_1a_c(n+1)+c_2
\end{align}
with initial conditions $a_c(1)=a_c(2)=1$.
\end{claim}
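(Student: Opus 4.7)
The plan is a direct computation: rewrite $a_c(n)$ as a single ratio in the $s_c$'s, simplify the left-hand side of \eqref{acRecur}, and apply the defining recurrence \eqref{GenSom4} once.

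First I would unfold the definitions: since $t_c(n)=s_c(n+1)/s_c(n)$, we get the compact formula
\begin{align*}
a_c(n) \;=\; \frac{t_c(n+1)}{t_c(n)} \;=\; \frac{s_c(n+2)\,s_c(n)}{s_c(n+1)^{2}}.
\end{align*}
Substituting this expression at the three indices $n$, $n+1$, $n+2$ into the product $a_c(n+2)\,a_c(n+1)^{2}\,a_c(n)$ and carefully tracking the factors, almost everything telescopes: the $s_c(n+3)^2$ in the denominator of $a_c(n+2)$ cancels with the numerator of $a_c(n+1)^2$, and the $s_c(n+1)^2$ in the denominator of $a_c(n)$ cancels with a matching factor from $a_c(n+1)^2$. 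After cancellation one is left with
\begin{align*}
a_c(n+2)\,a_c(n+1)^{2}\,a_c(n) \;=\; \frac{s_c(n+4)\,s_c(n)}{s_c(n+2)^{2}}.
\end{align*}

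Second, I would apply the generalized Somos-4 recurrence \eqref{GenSom4} at index $n+4$, which gives $s_c(n+4)s_c(n) = c_1 s_c(n+3)s_c(n+1) + c_2 s_c(n+2)^2$. Dividing by $s_c(n+2)^2$ yields
\begin{align*}
\frac{s_c(n+4)\,s_c(n)}{s_c(n+2)^{2}} \;=\; c_1\,\frac{s_c(n+3)\,s_c(n+1)}{s_c(n+2)^{2}} + c_2 \;=\; c_1\,a_c(n+1) + c_2,
\end{align*}
recognizing the ratio as $a_c(n+1)$ via the same formula. The initial conditions $a_c(1)=a_c(2)=1$ are immediate from $s_c(i)=1$ for $i\le 4$, since then $t_c(1)=t_c(2)=t_c(3)=1$ and so $a_c(1)=a_c(2)=1$.

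There is essentially no main obstacle here; the only risk is a bookkeeping slip in the cancellation step, so I would write out the exponents of each $s_c(n+j)$ appearing in the numerator and denominator explicitly before simplifying, to make the telescoping transparent.
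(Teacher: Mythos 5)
Your proposal is correct and is essentially the paper's own argument run in the opposite direction: the paper starts from the $s_c$ recurrence, divides by $s_c(n+2)^2$, and regroups the left side into $a_c(n+2)a_c(n+1)^2a_c(n)$, while you expand $a_c(n+2)a_c(n+1)^2a_c(n)$ into $s_c$'s, telescope to $s_c(n+4)s_c(n)/s_c(n+2)^2$, and then apply the same division of the Somos recurrence. The computation and cancellations are identical, and your explicit check of the initial conditions is a small bonus the paper omits.
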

\begin{proof}
We will simply manipulate the recurrence equation for $s_c(n)$ to
look like the recurrence equation (\ref{acRecur}).
\begin{align*}
s_c(n+4)s_c(n) &=
 c_1s_c(n+3)s_c(n+1)+c_2s_c(n+2)^2\\
\frac{s_c(n+4)s_c(n)}{s_c(n+2)^{2}} &=
c_1\frac{s_c(n+3)s_c(n+1)}{s_c(n+2)^{2}}+c_2
\end{align*}
Notice that the $s_c$ term on the right side is $a_c(n+1)$. By multiplying and
dividing by the correct terms on the left side we will get the left side of
(\ref{acRecur}).
\begin{align*}
\frac{s_c(n+4)s_c(n)}{s_c(n+2)^{2}}
&=\frac{s_c(n+4)s_c(n)}{s_c(n+2)^2}\frac{s_c(n+2)^2s_c(n+3)^2s_c(n+1)^2}{s_c(n+2)^2s_c(n+3)^2s_c(n+1)^2}\\
&=\frac{s_c(n+4)s_c(n+2)}{s_c(n+3)^2}\frac{s_c(n+3)^2s_c(n+1)^2}{s_c(n+2)^4}
\frac{s_c(n+2)s_c(n)}{s_c(n+1)^2}\\
&=a_c(n+2)a_c(n+1)^2a_c(n)
\end{align*}
Finally we obtain
\[a_c(n+2)a_c(n+1)^2a_c(n)=c_1a_c(n+1)+c_2\]
which is (\ref{acRecur}).
\end{proof}

Unfortunately, this recurrence for the ratios of ratios of $s_c(n)$
does not satisfy $m>1$. The proof of the next claim will help to
create a recurrence with $m=2$.

\begin{claim}\label{acRecur2C} The sequence generated by the recurrence
(\ref{acRecur}) also satisfies the recurrence
\begin{align}\label{acRecur2}
a_c(n+2)a_c(n+1)^2+a_c(n+1)^2a_c(n)=(2c_1+c_2+1)a_c(n+1)-c_1
\end{align}
\end{claim}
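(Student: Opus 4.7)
The plan is to reduce (\ref{acRecur2}) to a first-order identity in $a_c(n)$ and $a_c(n+1)$, and then establish that identity by induction. Using (\ref{acRecur}) to write $a_c(n+2) = (c_1 a_c(n+1) + c_2)/(a_c(n+1)^2 a_c(n))$ and substituting into (\ref{acRecur2}), after multiplying through by $a_c(n)$ the relation (\ref{acRecur2}) is equivalent to
\[
G(a_c(n),\, a_c(n+1)) = 0, \quad \text{where } G(x,y) := x^2 y^2 - (2c_1+c_2+1)\, xy + c_1(x+y) + c_2.
\]
It therefore suffices to prove $G(a_c(n), a_c(n+1)) = 0$ for every $n \geq 1$, i.e.\ to show that $G$ is a conserved quantity of the recurrence (\ref{acRecur}).

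I would prove this by induction on $n$. The base case $G(a_c(1), a_c(2)) = G(1,1) = 1 - (2c_1+c_2+1) + 2c_1 + c_2 = 0$ is immediate from the initial conditions. For the inductive step, set $x = a_c(n)$, $y = a_c(n+1)$, and, by (\ref{acRecur}), $z = a_c(n+2) = (c_1 y + c_2)/(x y^2)$. The key algebraic identity is
\[
x^2 y^2\, G(y, z) \;=\; (c_1 y + c_2)\, G(x, y),
\]
which one verifies by plugging in $z$, clearing the $x^2 y^2$ denominator on the left, and observing that the resulting polynomial in $x,y$ factors as $(c_1 y + c_2)\cdot G(x,y)$. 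The inductive hypothesis $G(x,y) = 0$ then forces $G(y,z) = 0$, completing the step.

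The main obstacle is checking this factorization; the computation is elementary but demands careful bookkeeping, and it succeeds precisely because the coefficient in (\ref{acRecur2}) is $2c_1 + c_2 + 1$ and not something else. A conceptually different route would be to substitute $a_c(n) = s_c(n+2) s_c(n)/s_c(n+1)^2$ and derive (\ref{acRecur2}) directly from the Somos recurrence (\ref{GenSom4}), but this produces a four-term identity in the $s_c$ values that does not follow from (\ref{GenSom4}) in one line. The conservation-law viewpoint above is cleaner and also foreshadows the promised $m=2$ recurrence: reading $G(a_c(n), a_c(n+1)) = 0$ as a quadratic in $a_c(n+1)$ immediately yields a first-order recurrence in which the highest-order term is squared.
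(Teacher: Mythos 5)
Your proof is correct and follows essentially the same route as the paper: the paper introduces the identical conserved quantity $T(n) = a_c(n+1)^2a_c(n)^2 - (2c_1+c_2+1)a_c(n+1)a_c(n) + c_1a_c(n+1) + c_1a_c(n) + c_2$ (your $G(a_c(n),a_c(n+1))$) and proves $T(n)=0$ by induction from $T(1)=0$, then recovers the link between the two recurrences exactly as you do. The only difference is cosmetic: the paper argues in the converse direction (the sequence defined by (\ref{acRecur2}) satisfies (\ref{acRecur}), then invokes uniqueness) and performs the inductive substitution via (\ref{acRecur2}), whereas you substitute via (\ref{acRecur}) and verify the factorization identity $x^2y^2\,G(y,z)=(c_1y+c_2)\,G(x,y)$ directly.
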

\begin{proof}
Showing the converse, that the sequence defined by (\ref{acRecur2})
satisfies (\ref{acRecur}), will prove this claim because of
uniqueness of the sequence. So assume the sequence
$\{a_c(n)\}_{n=0}^\infty$ is defined by (\ref{acRecur2}). Now let us
define a function $T(n)$ by
$$T(n):=a_c(n+1)^2a_c(n)^2-(2c_1+c_2+1)a_c(n+1)a_c(n)+c_1a_c(n+1)+c_1a_c(n)+c_2$$
I claim that it is enough to show $T(n)=0$, for if this is true then
rearranging terms and dividing both sides by $a_c(n+1)^2a_c(n)$ we
get
\begin{align*}
c_1a_c(n+1)+c_2&=(2c_1+c_2+1)a_c(n+1)a_c(n)-c_1a_c(n)-a_c(n+1)^2a_c(n)^2\\
\frac{c_1a_c(n+1)+c_2}{a_c(n)}&=(2c_1+c_2+1)a_c(n+1)-c_1-a_c(n+1)^2a_c(n)\\
\frac{c_1a_c(n+1)+c_2}{a_c(n+1)^2a_c(n)}&=\frac{(2c_1+c_2+1)a_c(n+1)-c_1-a_c(n+1)^2a_c(n)}{a_c(n+1)^2}
\end{align*}
The RHS of the above equality equals $a_c(n+2)$ because we assumed
the sequence $\{a_c(n)\}$ is defined by the recurrence
(\ref{acRecur2}). Therefore we also have that $a_c(n+2)=LHS$, i.e.-
\begin{align*}
a_c(n+2)=\frac{c_1a_c(n+1)+c_2}{a_c(n+1)^2a_c(n)}
\end{align*}
which is recurrence (\ref{acRecur}). So the sequence
$\seq{a_c(n)}{n}{1}{\infty}$, generated by (\ref{acRecur2}), also
satisfies the recurrence (\ref{acRecur}).

All that is left is showing, by induction, that $T(n)=0$ for all
$n$. For $n=1$ we do the following calculation
\begin{align*}
T(1)&=a_c(2)^2a_c(1)^2-(2c_1+c_2+1)a_c(2)a_c(1)+c_1a_c(2)+c_1a_c(1)+c_2\\
 &= 1\cdot1-(2c_1+c_2+1)\cdot 1 \cdot 1 + c_1 \cdot 1 + c_1 \cdot
 1+c_2\\
 &= 1-(2c_1+c_2+1)+2c_1+c_2\\
 &=0
\end{align*}
Now assume that $T(n-1)=0$ for some $n$. We substitute for
$a_c(n+1)$ in $T(n)$ from (\ref{acRecur2}) and simplify to obtain
\begin{align*}
T(n)=&a_c(n-1)^2a_c(n)^2-(2c_1+c_2+1)a_c(n-1)a_c(n)+c_1a_c(n-1)+\\
        &+c_1a_c(n)+c_2\\
 =& T(n-1)=0
\end{align*}
Therefore by induction, $T(n)=0$ for all $n$ and the claim is
proved. This proof used ideas from Guoce Xin's paper \cite{Xin}.
\end{proof}
Coming out of the proof of Claim \ref{acRecur2C} we get that
$T(n)=0$ is a first order recurrence with $m=2$ for $\{a_c(n)\}$ as
we had hoped. One would expect that, since $\{a_c(n)\}$ is by
definition a single sequence, the recurrence tree for $T(n)=0$ would
somehow consist only of this single sequence. Indeed this is the
case as we prove now.
\begin{claim}
The recurrence tree for
\begin{align}\label{SomO1D2}
a_c(n+1)^2a_c(n)^2-&(2c_1+c_2+1)a_c(n+1)a_c(n)+\\
 &+c_1a_c(n+1)+c_1a_c(n)+c_2=0
\end{align}
with $a(1)=1$, produces a single sequence in the sense that at every
level of the tree there is only one value that we haven't yet seen.
\end{claim}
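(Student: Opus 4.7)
The plan is to exploit the symmetry of the defining polynomial. Writing
\[
F(x,y) := x^2 y^2 - (2c_1 + c_2 + 1)\,xy + c_1 x + c_1 y + c_2,
\]
equation (\ref{SomO1D2}) reads $F(a_c(n), a_c(n+1)) = 0$. By inspection $F(x,y) = F(y,x)$, so whenever $(p,q)$ is a consecutive pair in $\{a_c(n)\}$, the swapped pair $(q,p)$ also satisfies the equation. This symmetry, combined with Claim~\ref{acRecur2C}, is the heart of the argument.

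The key step is a \emph{neighbor lemma}: for every $j \geq 2$, the two roots of $F(a_c(j), y) = 0$, viewed as a quadratic in $y$, are exactly $a_c(j-1)$ and $a_c(j+1)$. Indeed, $a_c(j+1)$ is a root directly from the recurrence, and $a_c(j-1)$ is a root because $F(a_c(j-1), a_c(j)) = 0$ together with the symmetry of $F$; since the quadratic has at most two roots, these are all of them. The root node $a_c(1)$ must be handled separately: using $a_c(1) = a_c(2) = 1$ we have $F(a_c(1), y) = F(a_c(2), y)$, whose two roots are $a_c(1)$ and $a_c(3)$ by the $j=2$ case. Thus the two children of $a_c(1)$ in the tree are $a_c(2) = 1$ and $a_c(3) = c_1 + c_2$.

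With the neighbor lemma I would induct on $k \geq 2$ to prove that the set of distinct values at level $k$ of the tree is exactly $\{a_c(j) : 2 \leq j \leq k+1\}$. The base $k = 2$ is the root-node computation above. For the inductive step, every node $a_c(j)$ at level $k$ has children $\{a_c(j-1), a_c(j+1)\}$, so the level-$(k+1)$ value set is
\[
\bigcup_{j=2}^{k+1} \{a_c(j-1), a_c(j+1)\} \;=\; \{a_c(1), a_c(2), \ldots, a_c(k+2)\} \;=\; \{a_c(2), \ldots, a_c(k+2)\},
\]
using $a_c(1) = a_c(2)$. Hence the only value appearing at level $k+1$ that was not already present is $a_c(k+2)$, which is precisely what the claim asserts. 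The one delicate point is the behavior at the root, where a naive symmetry swap would demand a nonexistent $a_c(0)$; the coincidence $a_c(1) = a_c(2)$ supplied by the initial conditions is exactly what makes the argument go through.
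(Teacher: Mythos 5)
Your proof is correct, and it takes a genuinely different route from the paper's at the key step. The paper establishes the fact that each node's parent value reappears among its children by brute force: it writes the two children of $y_o$ via the quadratic formula as $y^{\pm}$, substitutes each back into (\ref{XYRecur}), and verifies by computer algebra that $y_o$ is one of the resulting roots. You obtain the same fact in one line from the observation that the defining polynomial $F(x,y)$ is symmetric, $F(x,y)=F(y,x)$, so that ``$q$ is a child of $p$'' is a symmetric relation; this replaces an opaque computation with a structural reason. You also tighten the second half of the argument: the paper's passage from ``each node contributes at most one unseen child'' to ``each level contributes at most one unseen value'' is stated rather loosely, whereas your explicit identification of the level-$k$ value set as $\{a_c(2),\dots,a_c(k+1)\}$ makes visible why the new children of distinct nodes at the same level do not accumulate into several new values: each node carries some $a_c(j)$, its only possibly-new child is $a_c(j+1)$, and that is genuinely new only for $j=k+1$. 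Your treatment of the root via the coincidence $a_c(1)=a_c(2)$ matches what the paper does by direct computation at level $2$. One small refinement is worth adding to your neighbor lemma: the argument ``the quadratic has at most two roots, so these are all of them'' pins down the root set only when $a_c(j-1)\neq a_c(j+1)$. To cover the degenerate case, note that by (\ref{acRecur}) the product of the two roots of $F(a_c(j),y)$, namely $(c_1a_c(j)+c_2)/a_c(j)^2$, equals $a_c(j-1)a_c(j+1)$ exactly, so Vieta's formulas give both roots with multiplicity; the paper's own proof glosses over the same point.
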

\begin{proof}
Let $X:=a_c(n+1)$ and $Y:=a_c(n)$, then the first order quadratic
recurrence for the generalized Somos-4 sequence is rewritten as
\begin{align}\label{XYRecur}
Y^2 X^2 + (c_1-(2c_1+c_2+1)Y)X + (c_1Y+c_2) &= 0
\end{align}
Given some value $y_o$ for $Y$ there are two possible values for $X$
which satisfy (\ref{XYRecur}). This corresponds to the fact that
given some $a_c(n)$ there are two possible values for $a_c(n+1)$.
Using the quadratic formula, these two values, in terms of $y_o$,
are
\begin{align*}
y^{\pm}&:=\frac{-(c_1-(2c_1+c_2+1)y_o)\pm
\sqrt{(c_1-(2c_1+c_2+1)y_o)^2-4y_o^2(c_1y_o+c_2)}}{2y_o^2}
\end{align*}
Now, since these are values for $a_c(n+1)$ we substitute them back
in for $Y$ in (\ref{XYRecur}), solve for $X$, and get potentially 4
possible values for $a_c(n+2)$ that come from this specific
$a_c(n)=y_o$. However, when we solve the quadratic equation
$$(y^{+})^2 X^2 + (c_1-(2c_1+c_2+1)y^{+})X + (c_1y^{+}+c_2) = 0$$
for $X$ the two solutions we get are $y_o$ and a large expression in
terms of $y_o, c_1, c_2$ (similarly for $y^-$). This means that in
the $i^{th}$ level of the tree, representing all possible values for
$a_c(i)$, from each of the terms in the $i-1^{st}$ level there is at
most one term that we haven't yet seen. Now, lets look at the second
level given the initial condition (the root) $a_c(1)=1$. We solve
the quadratic equation
\begin{align*}
1^2 X^2 + (c_1-(2c_1+c_2+1)\cdot1)X + (c_1\cdot 1+c_2) &= 0\\
X^2 + (-c_1-c_2-1)X + (c_1+c_2) &= 0\\
X = 1 ~\mathrm{or}~ c_1+c_2
\end{align*}
So on the second level we only have one new term. Therefore, on the
third level, and all subsequent levels, we also only have one new
term.
\end{proof}
Since the recurrence tree for (\ref{SomO1D2}) consists only of
numbers from the sequence of ratios of ratios of $\{s_c(n)\}$, it
must be a rational recurrence tree. So we have found an example of a
non-linear recurrence with $m>1$ that generates rational recurrence
tree. However, this example was constructed in such a way that it
had to generate a rational recurrence tree. In the next section I
will generalize this example to get nontrivial sequences generating
rational recurrence trees.

\section{Generalized First Order Quadratic Recurrence
Tree}\label{Ord1QuadTree} The general form of a first order
non-linear recurrence is
\begin{align}\label{O1D2Gen}
\sum_{i=0}^m P_i(a(n))a(n+1)^i =0
\end{align}
where $P_i(Y)$ is a polynomial in $Y$ of some degree $d_i$. For
example, the sequence of ratios of ratios of generalized Somos-4 has
recurrence given by (\ref{O1D2Gen}) where $m=2$ and
\begin{align*}
P_2(Y)&= Y^2\\
P_1(Y)&= c_1-(2c_1+c_2+1)Y\\
P_0(Y)&= c_2+c_1Y
\end{align*}
For the remainder of this section we will assume that $m=2$,
$d_0=d_1=1$, and $P_2(Y)=Y^2$. Let
\begin{align}\label{specP}
P_2(Y)&=Y^2\notag\\
P_1(Y)&=A_1+A_2Y\\
P_0(Y)&=B_1+B_2Y\notag
\end{align}
where $A_1,A_2,B_1,B_2 \in \Z$. Under certain minimal sufficient
conditions a recurrence of this form will generate a rational
recurrence tree.
\begin{prop}
Let $a(1)=1$ in the recurrence (\ref{O1D2Gen}) with coefficient
polynomials given by (\ref{specP}). If
\begin{enumerate}
\item $A_1=B_2$ and
\item solving for $a(2)$ yields rational
numbers,
\end{enumerate}
then the recurrence generates a rational recurrence tree.
\end{prop}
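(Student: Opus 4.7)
The plan is to exploit a hidden symmetry forced by the equation $A_1 = B_2$, in direct analogy with the proof given above for (\ref{SomO1D2}), and then to propagate rationality down the tree via Vieta's formulas.

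The first step is to set $F(X, Y) := Y^2 X^2 + (A_1 + A_2 Y) X + (B_1 + B_2 Y)$, so that the recurrence reads $F(a(n+1), a(n)) = 0$. A direct expansion gives
\[
F(X, Y) - F(Y, X) = (A_1 - B_2)(X - Y),
\]
so hypothesis (i) makes $F$ symmetric in its two arguments. The immediate consequence is that $F(x, y) = 0$ if and only if $F(y, x) = 0$: whenever $x$ is a legitimate child of $y$ in the tree, the parent value $y$ is itself one of the two legitimate children of $x$. In other words, at every non-root node with value $x$ and parent $y$, the parent $y$ accounts for one of the two children of $x$.

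I would then pin down the other (new) child by Vieta. If a node carries value $y$ and has parent $p$, its two children are the roots of $y^2 X^2 + (A_1 + A_2 y) X + (B_1 + B_2 y) = 0$; one root is $p$, so the other root equals
\[
-\,p \,-\, \frac{A_1 + A_2 y}{y^{2}},
\]
which lies in $\Q$ as soon as $p$ and $y$ are rational and $y \neq 0$.

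The argument then closes by induction on the level of the tree. At level $2$ the two children of $a(1) = 1$ are rational by hypothesis (ii). For the inductive step, any node at level $n+1 \geq 3$ has a parent at level $n$ and a grandparent at level $n-1$, both rational by the inductive hypothesis, and the Vieta computation above forces the node itself to be rational. The technical point that will require the most care in the write-up is verifying non-vanishing of the intermediate $y$ values, so that the denominator $y^2$ causes no trouble; I expect this to be a manageable side condition rather than a serious obstacle, and in the Somos-4 specialization of Section~\ref{GenS4RatRat} it does not arise.
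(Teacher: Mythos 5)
Your proposal is correct and follows the same overall architecture as the paper's proof: first show that under $A_1=B_2$ the parent of any node reappears as one of that node's two children, then propagate rationality down the tree by Vieta's formulas and induction, with hypothesis (ii) supplying the base case at level $2$. Where you genuinely improve on the paper is in how the key lemma is established. The paper writes out the two roots $a(n)_1, a(n)_2$ via the quadratic formula and asserts that substituting them back and checking that $a_0$ is a root ``is nothing but algebraic manipulation that can be easily done using Maple''; your identity
\[
F(X,Y)-F(Y,X)=(A_1-B_2)(X-Y)
\]
replaces that unshown computation with a one-line verification, and moreover \emph{explains} why $A_1=B_2$ is exactly the right hypothesis: it is precisely the condition making $F$ symmetric, so that ``$x$ is a child of $y$'' becomes a symmetric relation. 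A second small difference: you recover the new child from the \emph{sum} of the roots, $-p-(A_1+A_2y)/y^2$, whereas the paper uses the \emph{product}; your version needs only $y\neq 0$ while the product version also needs the parent root $p\neq 0$, so yours is marginally more robust. You are also right to flag the non-vanishing of intermediate values as a side condition --- the paper silently ignores it --- though note that if some $y=0$ the equation degenerates to the linear $A_1X+B_1=0$, whose unique solution is still rational (when $A_1\neq 0$), so rationality of the tree survives even in that degenerate case; only the ``two children, one of which is the parent'' picture breaks down.
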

\begin{proof}
First we will show that $A_1=B_2$ implies that for every term $a_1 =
a(n)$ coming from solving
\[a(n)^2 a(n-1)^2 + (A_1+A_2 a(n))a(n-1) + (B_1+A_1a(n)) = 0\]
with $a(n-1)=a_0$, we get only one new $a(n+1)$. In other words,
solving
\[ a(n+1)^2 a_1^2 + (A_1+A_2 a(n+1))a_1 + (B_1+A_1a(n+1)) = 0\]
for $a(n+1)$ yields the solutions $\{a(n+1),a_0\}$. In recurrence
tree form this looks like:
 \[
    \xy
    (0,7)*+{\vdots};
    (0,0)*+{a(n-1)=a_0}="a1";
    (20,-10)*+{\ddots}="a22";
    (-5,-10)*+{a(n)_1=a_1}="a21";
    (15,-20)*+{a(n+1)_{1,1}}="a31";
    (-15,-20)*+{a(n+1)_{1,2}=a_0}="a32";
    (20,-25)*+{\vdots};
    {\ar@{-} "a1"; "a21"};
    {\ar@{-} "a21"; "a31"};
    {\ar@{-} "a21"; "a32"};
    {\ar@{-} "a1"; "a22"};
    \endxy
 \]

Let $a_o$ be a term in the $n-1^{st}$ level. Its children in the
recurrence tree are the solutions of the quadratic equation
\begin{align*}
a_o^2X^2+(A_1+A_2 a_o)X+(B_1+A_1 a_o)=0
\end{align*}
In other words, possibilities for $a(n)$ given that $a(n-1)=a_o$ are
\begin{align*}
a(n)_1&=\frac{-(A_1+A_2
a_o)+\sqrt{(A_1+A_2a_o)^2-4a_o^2(B_1+A_1a_o)}}{2a_o^2}\\
a(n)_2&=\frac{-(A_1+A_2
a_o)-\sqrt{(A_1+A_2a_o)^2-4a_o^2(B_1+A_1a_o)}}{2a_o^2}
\end{align*}
If $a(n)=a(n)_i$, for $i=1,2$, then to get $a(n+1)$ we solve
\begin{align*}
a(n)_i^2a(n+1)^2+(A_1+A_2 a(n)_i)a(n+1)+(B_1+A_1 a(n)_i)=0
\end{align*}
for $a(n+1)$. The goal is to show that $a_o$ is a solution for $a(n+1)$, so it
is enough to show that $a(n)_i^2a_o^2+(A_1+A_2 a(n)_i)a_o+(B_1+A_1 a(n)_i)=0$
for $i=1,2$. This is nothing but algebraic manipulation that can be easily done
using Maple or any other computer algebra system.

The other assumption that we made is that both answers for $a(2)$
are rational. When we solve for level 3 we know that in each case
one answer will be from level 1, so rational. Then the other answer
must be rational because the product of the two answers is the
constant term in the quadratic polynomial, $B_1+A_1a(2)_i$, which is
rational. Likewise, if levels $n-1$ and $n$ are rational then level
$n+1$ will be rational. So by induction we see that all levels are
rational numbers.
\end{proof}

Even with the stipulation that $A_1=B_2$ and $a(2)_1,a(2)_2$ are
both rational, this more general first order recurrence encompasses
more than just the Somos-4 ratios of ratios. For example, let
$A_1=B_2=1,A_2=5,B_1=8$ with the initial condition $a(1)=1$, then
the recurrence is
\begin{align}\label{example}
a(n-1)^2 a(n)^2 + (1+5a(n-1))a(n) + (8+a(n-1)) = 0
\end{align}
The reader may check that $a(2)_1, a(2)_2$ are in fact rational.
There is no $(c_1,c_2)$ such that (\ref{SomO1D2}), the generalized
Somos-4 ratios of ratios recurrence, is the same as (\ref{example}).

Another fact that is worth pointing out is that these recurrences
produce at most two sequences. In Section \ref{GenS4RatRat} we
showed that the first order quadratic recurrence for $a_c(n)$, the
ratios of ratios of the generalized Somos-4 sequence, generates a
unique sequence. This was partly due to the fact that at level 2 one
of the solutions is $a_c(1)$, so we only have one new solution.
However, it is possible to have two new solutions on level 2. In
this case, on level $n>2$ we could have two new solutions, each
coming from one of the $a(2)$. However, we can never get more than
two sequences.

\section{Recurrence Tree for Generalized Somos-4 Sequence}
So far we have looked at first order nonlinear recurrences that
generate trees. What about higher order nonlinear recurrences that
generate trees? As an example we will look at the generalized
Somos-4 recurrence (\ref{GenSom4}). This recurrence is order 4 and
quadratic, but with $m=1$. Since we have a first order quadratic
recurrence for the ratios of ratios of the generalized Somos-4
sequence given by (\ref{SomO1D2}) we can ``unfold" this recurrence
to get an order 3 recurrence that should be satisfied by the
generalized Somos-4 sequence. Since we assumed that $a_c(n) :=
t_c(n+1)/t_c(n)$ and $t_c(n):= s_c(n+1)/s_c(n)$ we can substitute
into (\ref{SomO1D2}). The recurrence we obtain is
\begin{align}\label{Som4Ord3}
s_c(n+3)^2&s_c(n)^2+c_1s_c(n+2)^3s_c(n)+c_2s_c(n+2)^2s_c(n+1)^2\\
-((2c_1&+c_2+1)s_c(n+2)s_c(n+1)s_c(n)-c_1s_c(n+1)^3)s_c(n+3) = 0
\notag
\end{align}

In Section \ref{GenS4RatRat} we were able to show that the
recurrence tree for the ratios of ratios of the Somos-4 recurrence
is just a sequence in disguise because there is only one new term
per level. One might think that the same happens here since
(\ref{Som4Ord3}) is supposed to generate the generalized Somos-4
sequence, however this is not the case. Instead, we get many new
terms per level, and therefore generate many sequences. Also
surprisingly, we even get some non-integer rational numbers in the
tree. As it turns out, at least one of the sequences in the tree has
a simple closed form.
\begin{prop}\label{Som4Ord3Subseq}
One of the sequences generated by the recurrence (\ref{Som4Ord3}) is
$$s_c(1)=s_c(2)=s_c(3)=1, \{(c_1+c_2)^{f(n)}\}_{n=4}^\infty$$
where $f(n)=\left\lfloor\frac{(n-3)^2}{4}\right\rfloor$.
\end{prop}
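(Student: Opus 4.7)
My plan is to proceed by induction on $n$, verifying directly that the proposed values form a valid branch of the recurrence tree. Let $u := c_1 + c_2$ and $f(n) := \lfloor (n-3)^2/4 \rfloor$, and observe that $f(1)=f(2)=f(3)=f(4)=0$, so the entire claim collapses to the single identity $s_c(n) = u^{f(n)}$ for all $n \geq 1$. Regarding (\ref{Som4Ord3}) as a quadratic in $X := s_c(n+3)$, I would show that $X = u^{f(n+3)}$ is always one of its two roots, assuming $s_c(n), s_c(n+1), s_c(n+2)$ are already given by the same formula.

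For $n=1$ the three inputs all equal $1$ and the quadratic reduces to $X^2 - (c_1+c_2+1)X + (c_1+c_2)=0$, whose roots are $1$ and $c_1+c_2$; I would select the branch $s_c(4)=1=u^{f(4)}$. For the inductive step I substitute $s_c(n+k)=u^{f(n+k)}$ for $k=0,1,2,3$ into (\ref{Som4Ord3}). The left-hand side becomes a sum of five monomials in $u$ with coefficients $1,\,c_1,\,c_2,\,-(2c_1+c_2+1),\,c_1$ and exponents
\begin{align*}
p_1 &= 2f(n+3)+2f(n), \\
p_2 &= 3f(n+2)+f(n), \\
p_3 &= 2f(n+2)+2f(n+1), \\
p_4 &= f(n+3)+f(n+2)+f(n+1)+f(n), \\
p_5 &= f(n+3)+3f(n+1).
\end{align*}

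To tabulate these I would write $f(n)=\frac{(n-3)^2-\epsilon_n}{4}$, where $\epsilon_n=0$ for $n$ odd and $\epsilon_n=1$ for $n$ even, and exploit the identities $\epsilon_{n+2}=\epsilon_n$, $\epsilon_n+\epsilon_{n+1}=1$, and $\epsilon_n+\epsilon_{n+3}=1$. A short parity case analysis shows that each $p_i$ equals one of $n^2-3n+2$, $n^2-3n+3$, $n^2-3n+4$, and that in both parity classes of $n$ the equation, after dividing through by $u^{n^2-3n+2}$, reduces to the single identity
\begin{align*}
u^2-(c_1+c_2+1)u+(c_1+c_2)=0,
\end{align*}
which factors as $(u-1)(u-(c_1+c_2))$ and therefore vanishes identically when $u=c_1+c_2$. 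The main obstacle is the parity bookkeeping for $p_2$ and $p_5$, whose values depend on the parity of $n$; once one sees that the same three coefficient-exponent pairings arise in either case, the identity is immediate and the induction closes.
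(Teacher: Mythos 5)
Your argument is correct and follows essentially the same route as the paper: induction on $n$, substituting the claimed powers of $c_1+c_2$ into (\ref{Som4Ord3}) and checking that $(c_1+c_2)^{f(n+3)}$ is a root of the resulting quadratic in $s_c(n+3)$ (the paper organizes the exponent bookkeeping by splitting into the cases $n=2i$ and $n=2i+1$ and solving the quadratic explicitly, whereas you verify vanishing directly via the $\epsilon_n$ device --- a cosmetic difference, and your exponent computations do check out). One small correction: $f(1)=\left\lfloor 4/4\right\rfloor = 1$, not $0$, so the claim does not literally collapse to $s_c(n)=(c_1+c_2)^{f(n)}$ for all $n\ge 1$; this is harmless, since your base case $n=1$ uses the actual initial values $1,1,1$ and your inductive step only invokes the formula at indices $\ge 2$, where it does hold.
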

\begin{proof}
First note that the sequence $\{f(n)\}_{n=4}^\infty$ is
$$0,1,2,4,6,9,12,16,20$$
I claim that $f(2i+1)-f(2i)=i-1$, and $f(2(i+1))-f(2i+1)=i-1$. The value of
$f(2i)$ is
\begin{align*}
\left\lfloor\frac{(2i-3)^2}{4}\right\rfloor&= i^2-3i+2
\end{align*}
and the value of $f(2i+1)$ is
\begin{align*}
\left\lfloor\frac{(2i+1-3)^2}{4}\right\rfloor&= i^2-2i+1
\end{align*}
Their difference is $i-1$ as claimed. Now, $f(2(i+1))$ is
\begin{align*}
\left\lfloor\frac{(2(i+1)-3)^2}{4}\right\rfloor&= i^2-i+0
\end{align*}
The difference $f(2(i+1))-f(2i+1)$ is indeed $i-1$.

Now we will prove this proposition by induction. First, the base
case: we need to show that $s_c(4)=(c_1+c_2)^0, s_c(5)=(c_1+c_2)^1,
s_c(6)=(c_1+c_2)^2$. This is straightforward computation, substitute
in $s_c(1)=s_c(2)=s_c(3)=1$ in (\ref{Som4Ord3}) and solve for
$s_c(4)$. We get two possibilities, 1 and $c_1+c_2$. In this case we
choose $s_c(4)=1$. Now we again substitute $s_c(2)=s_c(3)=s_c(4)=1$
in (\ref{Som4Ord3}) and solve for $s_c(5)$. This is the same
quadratic equation so we get the same 2 solutions, this time we
choose $s_c(5)=c_1+c_2$. To finish off the base case we substitute
$s_c(3)=s_c(4)=1, s_c(5)=c_1+c_2$ in (\ref{Som4Ord3}) and solve for
$s_c(6)$. Our two possible solutions are
$$c_1^2+c_2c_1+c_2~\mathrm{and}~(c_1+c_2)^2$$
so we choose $s_c(6)=(c_1+c_2)^2$. So the base case is true. Now
assume, as the inductive hypothesis, that the proposition is true up
to $n+2$. We have two possibilities
\begin{description}
\item[Case 1:] $n=2i$
\begin{align*}
s_c(2i)=&(c_1+c_2)^k\\
s_c(2i+1)=&(c_1+c_2)^{k+i-1}\\
s_c(2(i+1))=&(c_1+c_2)^{k+2i-2}
\end{align*}
If we substitute this into (\ref{Som4Ord3}) and simplify, we obtain the
quadratic equation
\begin{align*}
s(2i+3)^2(c_1+c_2)^{2k}-((c_1+c_2+1)(c_1+c_2)^{3k+3i-3})s(2i+3)&+\\
+(c_1+c_2)(c_2+c_1)^{4k+6i-6}&=0
\end{align*}
which we can easily solve using the quadratic formula. Our two possibilities
when solving for $s(2i+3)$ are
\[s(2i+3)= \left\{\begin{array}{l} (c_1+c_2)^{k+3i-2}\\
(c_1+c_2)^{k+3i-3}
\end{array}\right.\]

\begin{comment}
{\allowdisplaybreaks
\begin{align*}
&s(2i+3) = \frac{(c_1+c_2+1)(c_1+c_2)^{3k+3i-3}}{2(c_1+c_2)^{2k}}\pm\\
&\pm\frac{
\sqrt{((c_1+c_2+1)(c_1+c_2)^{3k+3i-3})^2-4(c_1+c_2)^{2k}(c_2+c_1)^{4k+6i-5}}}{2(c_1+c_2)^{2k}}\\
&\phantom{s(2i+3}= \frac{(c_1+c_2+1)(c_1+c_2)^{3k+3i-3}}{2(c_1+c_2)^{2k}} \pm\\
&\phantom{s(2i+3=}\pm \frac{\sqrt{(c_1+c_2+1)^2(c_1+c_2)^{6k+6i-6}-4(c_1+c_2)^{6k+6i-5}}}{2(c_1+c_2)^{2k}}\\
&\phantom{s(2i+3}= \frac{(c_1+c_2+1)(c_1+c_2)^{3k+3i-3} \pm
\sqrt{(c_1+c_2)^{6k+6i-6}(c_1+c_2-1)^2}}{2(c_1+c_2)^{2k}}\\
&\phantom{s(2i+3}=\frac{(c_1+c_2)^{3k+3i-3}\left((c_1+c_2+1)\pm
(c_1+c_2-1)\right)}{2(c_1+c_2)^{2k}}\\
&\phantom{s(2i+3}=\frac{1}{2}(c_1+c_2)^{k+3i-3}\left(c_1+c_2+1 \pm (c_1+c_2-1) \right)\\
&\phantom{s(2i+3)}= \left\{\begin{array}{l} \frac{1}{2}(c_1+c_2)^{k+3i-3}(2c_1+2c_2)\\
\frac{1}{2}(c_1+c_2)^{k+3i-3}(2)
\end{array}\right.\\
&\phantom{s(2i+3)}= \left\{\begin{array}{l} (c_1+c_2)^{k+3i-2}\\
(c_1+c_2)^{k+3i-3}
\end{array}\right.
\end{align*}}
\end{comment}

We expected $s_c(2(i+1)+1)=(c_1+c_2)^{k+2i-2+i}=(c_1+c_2)^{k+3i-2}$
and we have it if we choose the ``$+$" in the quadratic formula.
\item[Case 2:] $n=2i+1$
\begin{align*}
s_c(2i+1)=&(c_1+c_2)^k\\
s_c(2(i+1))=&(c_1+c_2)^{k+i-1}\\
s_c(2(i+1)+1)=&(c_1+c_2)^{k+2i-1}
\end{align*}
Again we substitute this in to (\ref{Som4Ord3}) to obtain the following
quadratic equation
\begin{align*}
s(2i&+4)^2(c_1+c_2)^{2k}+\\
&-\left((2c_1+c_2+1)(c_1+c_2)^{3k+3i-2}-c_1(c_1+c_2)^{3k+3i-3}\right)s(2i+4)+\\
&+c_1(c_1+c_2)^{4k+6i-3}+c_2(c_2+c_1)^{4k+6i-4}=0
\end{align*}
which we can again solve using the quadratic equation and obtain
\[ s(2i+4) = \left\{\begin{array}{l} (c_1+c_2)^{k+3i-1}\\
(c_1+c_2)^{k+3i-3}(c_2+c_2c_1+c_1^2)
\end{array}\right. \]
\begin{comment}
I'll skip the beginning steps here
\begin{align*}
s(2i+4) =&
\frac{(c_1+c_2)^{3k+3i-3}(c_2+3c_2c_1+c_2^2+2c_1^2)}{2(c_1+c_2)^{2k}}\pm\\
&\pm\frac{\sqrt{(c_1+c_2)^{6k+6i-6}c_2^2(c_1+c_2-1)^2}}{2(c_1+c_2)^{2k}}\\
=& \frac{(c_1+c_2)^{3k+3i-3}(c_2+3c_2c_1+c_2^2+2c_1^2)}{2(c_1+c_2)^{2k}}\pm\\
&\pm\frac{(c_1+c_2)^{3k+3i-3}c_2(c_1+c_2-1)}{2(c_1+c_2)^{2k}}\\
=& \frac{1}{2}(c_1+c_2)^{k+3i-3}\left((c_2+3c_2c_1+c_2^2+2c_1^2)\pm c_2(c_1+c_2-1)\right)\\
=& \left\{\begin{array}{l} \frac{1}{2}(c_1+c_2)^{k+3i-3}(c_2+3c_2c_1+c_2^2+2c_1^2+c_2(c_1+c_2-1))\\
\frac{1}{2}(c_1+c_2)^{k+3i-3}(c_2+3c_2c_1+c_2^2+2c_1^2-c_2(c_1+c_2-1))
\end{array}\right.\\
=& \left\{\begin{array}{l} \frac{1}{2}(c_1+c_2)^{k+3i-3}(2(c_1+c_2)^2)\\
\frac{1}{2}(c_1+c_2)^{k+3i-3}(2c_2+2c_2c_1+2c_1^2)
\end{array}\right.\\
=& \left\{\begin{array}{l} (c_1+c_2)^{k+3i-1}\\
(c_1+c_2)^{k+3i-3}(c_2+c_2c_1+c_1^2)
\end{array}\right.\\
\end{align*}
\end{comment}
Again, if we choose the $``+"$ in the quadratic equation we get
\[s_c(2(i+2))=(c_1+c_2)^{k+2i-1+i}=(c_1+c_2)^{k+3i-1}\] as expected.
\end{description}
\end{proof}
The fact that we were able to find a nice closed form for one of the integer
sequences in this recurrence tree is very surprising. The closed form for the
generalized Somos-4 sequence is in terms of elliptic theta functions
\cite{Hone}, but by finding a lower order recurrence with higher degree we were
able to find a polynomial sequence.

In looking at the tree that the recurrence (\ref{Som4Ord3})
generates I have noticed that something even more general seems to
be true.
\begin{conj}
Let $T$ be the recurrence tree for (\ref{Som4Ord3}). Every integer,
and the numerator and denominator of every (reduced) non-integer
rational number in $T$, are products of terms in the generalized
Somos-4 sequence.
\end{conj}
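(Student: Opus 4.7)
The plan is to prove the conjecture by strong induction on the level, exploiting the bridge between (\ref{Som4Ord3}) and the first-order quadratic recurrence (\ref{SomO1D2}) for $a_c$ established in Section \ref{GenS4RatRat}. The key observation is that the symmetry of (\ref{SomO1D2}) identified there --- that one of the two $a_c$-children of any node must equal the grandparent --- translates into an explicit ``memory-child'' formula in the $s_c$-tree. Setting $a_c(n+1) = s_c(n+3) s_c(n+1)/s_c(n+2)^2$ equal to $a_c(n-1) = s_c(n+1) s_c(n-1)/s_c(n)^2$ shows that one of the two values at level $n+3$ along any path is always
\begin{align*}
s_c(n+3)_{\mathrm{mem}} = \frac{s_c(n-1)\, s_c(n+2)^2}{s_c(n)^2},
\end{align*}
and by Vieta's product formula applied to (\ref{Som4Ord3}) the other child is then forced to be
\begin{align*}
s_c(n+3)_{\mathrm{new}} = \frac{c_1\, s_c(n+2)\, s_c(n) + c_2\, s_c(n+1)^2}{s_c(n-1)}.
\end{align*}
On the true Somos path these reduce, via (\ref{GenSom4}), to $s_c(n-1) s_c(n+2)^2/s_c(n)^2$ and $s_c(n+3)$ respectively.

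The inductive claim I would attempt is that every value in $T$ is a Laurent monomial in $\{s_c(k)\}_{k \ge 1}$. The memory-child formula is automatically such a monomial whenever the four preceding path values are, so the inductive step reduces to showing that the new-child formula again produces a Laurent monomial in Somos terms. Equivalently, one has to verify that for every path in $T$ the combination $c_1 s_c(n+2) s_c(n) + c_2 s_c(n+1)^2$ factors as $s_c(n-1)$ times a Laurent monomial in Somos terms. On the Somos path this is exactly (\ref{GenSom4}); on the special power-of-$(c_1+c_2)$ path of Proposition \ref{Som4Ord3Subseq} it follows by exponent arithmetic; the general case requires a path-dependent family of Somos-like identities.

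The main obstacle will be proving this family of identities uniformly. A natural approach is to classify paths in $T$ by their sequence of memory-versus-new choices and to show, by a parallel induction along each class, that the four-tuple $(s_c(n-1), s_c(n), s_c(n+1), s_c(n+2))$ admits a canonical expression as Laurent monomials in a bounded window of consecutive Somos terms, from which the desired factorization of $c_1 s_c(n+2) s_c(n) + c_2 s_c(n+1)^2$ follows by applying (\ref{GenSom4}) at shifted indices. A secondary subtlety is passing from membership in the monoid of Laurent monomials to the statement about reduced numerators and denominators: distinct polynomials $s_c(i), s_c(j) \in \Z[c_1,c_2]$ may share factors, so one must verify that any such cancellation preserves the ``product of Somos terms'' form on each side; the Laurent phenomenon for Somos-4 proved by Fomin and Zelevinsky \cite{FZ} should supply the needed control.
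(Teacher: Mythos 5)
First, a point of comparison: the statement you are addressing appears in the paper only as a \emph{conjecture}; the paper offers no proof of it, so there is no argument of the author's to measure yours against, and your proposal must stand entirely on its own.

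On its own terms, your setup is correct and is a genuinely useful reduction. The symmetry of (\ref{SomO1D2}) does force one root of (\ref{Som4Ord3}), viewed as a quadratic in $s_c(n+3)$, to be the ``memory'' value $s_c(n-1)s_c(n+2)^2/s_c(n)^2$ (provided the preceding window of path values satisfies the shifted relation, which holds along any path once one is past the initial conditions), and Vieta's product formula then correctly identifies the other root as $\bigl(c_1 s_c(n+2)s_c(n)+c_2 s_c(n+1)^2\bigr)/s_c(n-1)$. However, the proposal stops exactly where the conjecture becomes hard. You state yourself that the inductive step requires ``a path-dependent family of Somos-like identities'' showing that $c_1 s_c(n+2)s_c(n)+c_2 s_c(n+1)^2$ equals $s_c(n-1)$ times a Laurent monomial in Somos terms along \emph{every} path, and you verify this only on the two paths already understood (the true Somos path, via (\ref{GenSom4}), and the power-of-$(c_1+c_2)$ path of Proposition \ref{Som4Ord3Subseq}); for arbitrary paths you offer an organizational strategy, not a proof. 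Two further gaps compound this. Your inductive hypothesis --- that every value in $T$ is a Laurent monomial in $\{s_c(k)\}$ --- is strictly stronger than the conjecture and is itself unestablished; it could fail even if the conjecture holds. And the final passage from Laurent monomials to the statement about \emph{reduced} numerators and denominators is deferred to the Laurent phenomenon of \cite{FZ}, which controls denominators of the $s_c(n)$ in terms of initial cluster variables but says nothing about cancellations between distinct $s_c(i)$ and $s_c(j)$ regarded as polynomials in $c_1,c_2$. In short: a promising program with a correct and checkable first lemma, but the conjecture remains open after your argument.
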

\noindent Clearly, proposition \ref{Som4Ord3Subseq} is consistent with this
conjecture since $s_c(5)=c_1+c_2$ in the general Somos-4 recurrence
(\ref{GenSom4}).

\section{Maple code}
This subject could not have been studied without the use of a
computer algebra system, Maple in my case. The Maple code
accompanying this paper can be found on my website
\texttt{http://math.rutgers.edu/$\sim$eahogan/maple/}. I created
programs that calculate the recurrence tree for a given recurrence
of any order and any degree. These programs can be found in the file
\texttt{RecurrenceTree.txt}. I also created programs that generate
all recurrences that have rational recurrence trees to a certain
depth (i.e.- if the tree for a specific recurrence is rational up to
a test depth, the program outputs that recurrence). Those can be
found in file \texttt{GenerateRationalRecurrenceTrees.txt}.

\section{Conclusion}
My study of recurrence trees in this paper may be just the beginning. The first
order recurrences I looked at were limited to the case where $m=2$. As $m$
grows it seems less likely that the recurrence trees generated will be
rational. Though it could be the case that a subtree is rational, or perhaps
just a single sequence. The only higher order recurrence tree I investigated
was that of the generalized Somos-4 recurrence. That specific recurrence is not
completely characterized, but I suspect a generalization, along the lines of
section \ref{Ord1QuadTree}, can be made which may yield behavior like
(\ref{Som4Ord3}).

\section{Acknowledgements}
I would like to thank my advisor, Doron Zeilberger, for bringing my attention
to this problem and helping me along the way. This material is based upon work
supported by the U.S. Department of Homeland Security under Grant Award Number
2007-ST-104-000006.  The views and conclusions contained in this document are
those of the authors and should not be interpreted as necessarily representing
the official policies, either expressed or implied, of the U.S. Department of
Homeland Security.

%\bibliographystyle{elsevier}
%\bibliography{bibliog}

\end{document}